\numberwithin{equation}{section}
\newtheorem{thm}{Theorem}[section]
\theoremstyle{definition}
\theoremstyle{remark}
 \newtheorem{rmk}[thm]{Remark}
\newcommand{\R}{\mathbb{R}}
\newcommand{\Leb}{\mathcal{L}}
\newcommand{\n}{\mathcal{N}}
\newcommand{\spt}{\operatorname{spt}}
\newcommand{\Diff}{\operatorname{Diff}}
\begin{document}

\title[Optimal transport map]
{A strategy for non-strictly convex transport costs and the example of $||x-y||^p$ in  $\R^2$}

\author{Guillaume Carlier}\address{{\bf G.C.} CEREMADE, Universit\'e Paris Dauphine, Pl. de Lattre de Tassigny, 75775, Paris Cedex 16, FRANCE, \tt{carlier@ceremade.dauphine.fr},}

\author{Luigi De Pascale}\address{{\bf L.D.P.} Dipartimento di Matematica
  Applicata, Universit\'a di Pisa, Via Buonarroti 1/c, 56127
  Pisa, ITALY, \tt{depascal@dm.unipi.it},}

\author{Filippo Santambrogio}\address{{\bf F.S.} CEREMADE, Universit\'e Paris Dauphine, Pl. de Lattre de Tassigny, 75775, Paris Cedex 16, FRANCE, \tt{filippo@ceremade.dauphine.fr}.}

\begin{abstract} 
This paper deals with the existence of optimal transport maps for some optimal transport problems with a convex but non strictly convex cost. We  give a decomposition strategy to address this issue. As part of our strategy, we  have to treat some transport problems, of independent interest,  with a convex constraint on the displacement. As an illustration of our strategy, we prove  existence of optimal transport maps in the case where the source measure is absolutely continuous with respect to the Lebesgue measure and the transportation cost is of the form $h(\Vert x-y\Vert)$ with $h$ strictly convex increasing and $\Vert . \Vert$ an arbitrary norm in $\R^2$. 

\end{abstract}

\keywords{Monge-Kantorovich problem, optimal transport problem}
\subjclass[2000]{49Q20, 49K30, 49J45}
\date{August 28 2009}
\maketitle

\section{Introduction}

Given two probability measures $\mu$ and $\nu$ on $\R^d$ and a
transport cost $c$ : $\R^d \to \R$ the corresponding Monge problem
consists in minimizing the average transport cost $\int_{\R^d}
c(x-T(x)) \mu(dx)$ among all \emph{transport maps} $T$ i.e. maps
pushing forward $\mu$ to $\nu$ (which as usual is denoted by
$T_\#\mu=\nu$). It is a highly nonconvex problem (whose admissible set
may even be empty if $\mu$ has atoms for instance) and it is therefore
relaxed to the Monge-Kantorovich problem that consists in minimizing
$\int_{\R^d\times \R^d} c(x-y) \gamma(dx,dy)$ over $\Pi(\mu, \nu)$, the set of
transport plans i.e. of probability measures on $\R^d\times \R^d$
having $\mu$ and $\nu$ as marginals. To prove existence of an optimal
transport map one thus aims to prove that there is an optimal plan
$\gamma$ (existence of such plans holds under very mild assumptions
since the Monge-Kantorovich problem is linear) that is in fact induced
by a transport map i.e. of the form $\gamma=({\rm{id}}, T)_\# \mu$. To
achieve this goal, one usually uses strongly the dual problem that
consists in maximizing $\int_{\R^d} \phi d \mu+\int_{\R^d} \psi d \nu$
subject to the constraint that $\phi(x)+\psi(y)\leq c(x-y)$. An
optimal pair $(\phi, \psi)$ for the dual is called a pair of
Kantorovich potentials. It is very well-known (under reasonable
assumptions on the measures) that when $c$ is strictly convex then
this strategy gives an optimal transport (see \cite{GaMc} or section
\ref{decomp} below where the arguments is briefly recalled; we also
refer to the book \cite{Vil} for a general overview and recent
developments of optimal transport theory).

It is also well-known that lack of strict convexity makes the
existence of an optimal transport much more delicate. Even the
important case (originally considered by Monge) where $c$ is a norm
was well understood only in recent years (\cite{EvaGan},
\cite{CafFelMcC}, \cite{AmbPra}, \cite{AmbKirPra}, \cite{ChaDeP},
\cite{Car}, \cite{ChaDeP09}, \cite{Car2}).  In the case of the
Euclidean norm, for example, the direction of the displacement is determined
by a Kantorovich potential and transport only takes place on a set of
segments called \emph{transport rays}. On the one hand, the lack of
strict convexity in the radial direction gives rise to an
indeterminacy of the displacement length, but on the other hand, the
problem on transport rays is one dimensional and then much
simpler. These observations lead to a strategy proof originally due to
Sudakov (\cite{Sud}) consisting in reducing to a one-dimensional
problem on each transport ray (monotone transport for instance) and
then glue the solutions together.  The most involved part of the full
proof consisted in proving that the transport rays have enough
regularity so that the disintegrated measures on such rays are non-atomic (see
\cite{CafFelMcC}, \cite{Car} for such a proof).

In the present paper we consider a convex but not strictly convex
$c$. We propose a decomposition strategy that takes advantage of the
fact that whenever the displacement is not fully determined it means
that it lies in some \emph{face} of $c$ but on such a face the cost is
affine and is therefore unchanged if we replace the transport plan by
another one which has the same marginals and satisfy the further
constraint that the displacement belongs to the face.  In the spirit,
our strategy can be compared to Sudakov's one but it is different
since there is no real analogue of transport rays here. Instead, our
strategy, detailed in section \ref{decomp}, involves "face restricted
problems" that are optimal transport problems with convex constraints
on the displacement. Such constrained problems have, we believe, their
own interest and motivation (e.g. due to connections with $L^{\infty}$
transportation problems as studied in \cite{ChaDepJuu}) and we will
address some of them in section \ref{constrained}. We will avoid here
subtle disintegration arguments to glue together the face restricted
problems in general but will instead illustrate in section \ref{cas2d}
how our strategy easily produces an optimal transport map in the case
of $c(z)=h(\Vert z\Vert)$ with $h$ strictly convex increasing and
$\Vert \cdot \Vert$ an arbitrary norm in $\R^2$. The contributions of this
paper are then :
\begin{itemize}
\item a general decomposition strategy to deal with convex but non
  strictly convex costs,
\item a contribution to constrained transport problems,
\item the proof of existence of optimal transport maps for a class of
  costs in $\R^2$.
\end{itemize}

\section{Strategy of decomposition}\label{decomp}
In this section we outline a general decomposition strategy to study
existence of an optimal transport for the Monge-Kantorovich problem
\begin{equation*}\label{e1wasse}
  \min\left\{\int_{\R^d\times\R^d}\!\!c(x-y)\,\gamma(dx,dy)\;:\;\gamma\in\Pi(\mu,\nu)\right\},
\end{equation*}
where $c$ is convex but not strictly convex.  Not all of the steps
listed below can be always carried on in full generality. In the
following sections we will detail some case where this is possible and
illustrate some applications.

The decomposition is based on the following steps:
\begin{itemize}
\item Consider an optimal plan $\gamma$ and look at the optimality
  conditions by means of a solution $(\phi,\psi)$ of the dual
  problem. From the fact that
$$\phi(x)+\psi(y)=c(x-y)\;\mbox{ on }\spt\gamma\quad\mbox{and}\quad 
\phi(x)+\psi(y)\leq c(x-y)$$ one deduces that if $x$ is a
differentiability point for $\phi$ (which is denoted $x\in
\Diff(\phi)$),
$$\nabla \phi(x)\in\partial c(x-y),$$
which is equivalent to
\begin{equation}\label{cstar}
  x-y\in \partial c^*(\nabla\phi(x)).
\end{equation}
Let us define
$$\mathcal{F}_c:=\{\partial c^*(p)\; : \; p\in\R^d\},$$
which is the set of all values of the subdifferential multi-map of
$c^*$. These values are those convex sets where the function $c$ is
affine, and they will be called {\it faces} of $c$.

Thanks to \eqref{cstar}, for every fixed $x$, all the points $y$ such
that $(x,y)$ belongs to the support of an optimal transport plan are
such that the difference $x-y$ belong to a same face of
$c$. Classically, when these faces are singleton (i.e. when $c^*$ is
differentiable, which is the same as $c$ being strictly convex), this
is the way to obtain a transport map, since only one $y$ is admitted
for every $x$.

Equation \eqref{cstar} also enables one to classify the points $x$ as
follows. For every $K\in \mathcal{F}_c$ we define the set
$$X_K:=\{x\in \Diff(\phi)\;:\; \partial c^*(\nabla\phi(x))=K\}.$$
Hence $\gamma$ may be decomposed into several subplans $\gamma_K$
according to the criterion $x\in X_K$, which, as we said, is
equivalent to $x-y\in K$.

If the set $\mathcal{F}_c$ is finite or countable, we can define
$$\gamma_K:=\gamma_{|X_K\times\R^d},$$
which is the simpler case. Actually, in this case, the marginal
$\mu_K:=(\pi_1)_\#\gamma_K$ (where $\pi_1(x,y)=x$) is a submeasure of
$\mu$, and in particular it inherits the absolute continuity from
$\mu$. This is often useful for proving existence of transport maps.

If $\mathcal{F}_c$ is uncountable, in some cases one can still rely on
a countable decompositions by considering the set
$\mathcal{F}_c^{multi}:=\{K\in \mathcal{F}_c\;:\;K\mbox{ is not a
  singleton }\}$. If $\mathcal{F}_c^{multi}$ is countable, then one
can separate those $x$ such that $\partial c^*(\nabla(\phi(x))$ is a
singleton (where a transport already exists) and look at a
decomposition for $K\in \mathcal{F}_c^{multi}$ only.

In some other cases, it could be useful to bundle together different
possible $K$'s so that the decomposition is countable, even if
coarser. We will give an example of this last type in Section 4.

\item This reduces the transport problem to a superposition of
  transport problems of the type
$$
\min \left\{\int_{\R^d\times\R^d}\!\!c(x-y)\,\gamma(dx,dy)\;:
\gamma\in\Pi(\mu_K,\nu_K),\; spt \gamma \subset \{x-y\in K\}\right\}.
$$
The advantage is that the cost $c$ restricted to $K$ is easier to
study.  For instance, if $K$ is a face of $c$, then $c$ is affine on
$K$ and in this case the transport cost does not depend any more on
the transport plan.
\item If $K$ is a face of $c$ the problem is reduced to find a
  transport map from $\mu_K$ to $\nu_K$ satisfying the constraint
  $x-T(x)\in K$, knowing a priori that a transport plan satisfying the
  same constraint exists.

  In some cases (for example if $K$ is a convex compact set with
  non-empty interior) this problem may be reduced to an $L^\infty$
  transport problem.  In fact if one denotes by $||\cdot||_K$ the
  (gauge-like) ``norm'' such that $K=\{x\,:\,||x||_K\leq 1\}$, one has
  \begin{equation}\label{linftyK}
    \min\Big\{\max\{||x-y||_K,\,(x,y)\in\spt(\gamma)\},\gamma \in\Pi(\mu,\nu)\Big\}\leq 1
  \end{equation}
  and the question is whether the same result would be true if one
  restricted the admissible set to transport maps only (passing from
  Kantorovich to Monge, say).  The answer would be positive if a
  solution of \eqref{linftyK} was induced by a transport map $T$
  (which is true if $\mu_K\ll\Leb^d$ and $K=\overline{B(0,1)}$, see
  \cite{ChaDepJuu}, but is not known in general). Moreover, the answer
  is also positive in $\R$ where the monotone transport solves all the
  $L^p$ problems, and hence the $L^\infty$ as well.
\item A positive answer may be also given in case (and it is actually
  almost equivalent) one is able to select, for instance by a
  secondary minimization, a particular transport plan satisfying
  $\spt(\gamma)\subset\{x-y\in K\}$ which is induced by a map.  This
  leads to the very natural question of solving
$$ \min\left\{\int_{\R^d\times\R^d}\!\![\frac 12 |x-y|^2+\chi_K(x-y)]\,d\gamma\;:\;\gamma\in\Pi(\mu,\nu)\right\},
$$
or, more generally, transport problems where the cost function
involves convex constraints on $x-y$. These problems are studied in
Section 3. For instance, in the quadratic case above, we can say that
the optimal transport $T$ exists and is given by
$T(x)=x-{\rm{proj}}_K(\nabla\phi(x))$ (${\rm{proj}}_K$ denoting
projection on $K$), provided two facts hold:
\begin{itemize}
\item $\mu$ is absolutely continuous, so that we can assure the
  existence of a (possibly approximate) gradient of $\phi$
  $\mu-$a.e. under mild regularity assumptions on $\phi$;
\item an optimal potential $\phi$ does actually exist, in a class of
  functions (Lipschitz, BV) which are differentiable almost
  everywhere, at least in a weak sense.
\end{itemize}
\item In order to apply the study of convex-constrained problems to
  the original problem with $c(x-y)$ the first issue (i.e. absolute
  continuity) does not pose any problem if the decomposition is finite
  or countable, while it is non trivial, and it presents the same kind
  of difficulties as in Sudakov's solution of the Monge's problem, in
  case of disintegration.  For interesting papers related to this kind
  of problems, see for instance \cite{CarDan,Car}.
\item As far as the second issue is concerned, this is much more
  delicate, since in general there are no existence results for
  potentials with non-finite costs.  In particular, a counterexample
  has been provided by Caravenna when $c(x,y)=|x-y|+\chi_K(x-y)$ where
  $K=\R^+\times\R^+\subset\R^2$ and it is easily adapted to the case
  of quadratic costs with convex constraints.  On the other hand, it
  is easy to think that the correct space to set the dual problem in
  Kantorovitch theory for this kind of costs would be $BV$
  since the the constraints on $x-y$ enable one to control the
  increments of the potentials $\phi$ and $\psi$ on some directions,
  thus giving some sort of monotonicity.  Yet, this is not sufficient
  to find a bound in $BV$ if an $L^\infty$ estimate is not available
  as well and the counterexample that we mentioned - which gives
  infinite values for both $\phi$ and $\psi$, exactly proves that this
  kind of estimates are hard to prove.
\end{itemize}

\begin{rmk}
  An interesting example that could be approached by this strategy is
  that of crystalline norms (a problem that has been already solved by
  a different method in \cite{AmbKirPra}). In this case the faces of
  the cost $c$ are polyhedral cones but, if the support of the two
  measures are bounded, we can suppose that they are compact convex
  polyhedra. This means, thanks to the considerations we did above,
  that it is possible to perform a finite decomposition and to reduce
  the problem to some $L^\infty$ minimizations for norms whose unit
  balls are polyhedra. In particular solving the $L^\infty$ problem
  for crystalline norms is enough to solve the usual $L^1$ optimal
  transport problem for the crystalline norms.
\end{rmk}

\begin{rmk}
  If, on the other hand, one wants a simple example that can be completely solved through this strategy and that works in any dimension, one can look at the cost $c(z)=(|z|-1)_+^2$, which vanishes for displacements smaller than one. In this case it is easy to see that $\mathcal{F}_c^{multi}$ has  only one element, which is given by the Euclidean ball $\overline{B(0,1)}$. The associated $L^\infty$ problem has been solved in \cite{ChaDepJuu} and this is enough to get the existence of an optimal transport.  
 \end{rmk}

\section{Constrained transport problems}\label{constrained}
In this section we see two useful examples of transport problem under
the constraint $x-y\in K$. Since the cost we use, due to this
constraint, is lower semicontinuous but not finitely valued, it is
well-known that duality holds (the minimum of the Kantorovich problem
coincides with the supremum of the dual one), but existence of
optimizers in the dual problem is not guaranteed. As we underlined in
Section 2, this is a key point and what we present here will always
assume (artificially) that this existence holds true. In section 4 
we will show a relevant example in which this is actually the case.

\subsection{Strictly convex costs with convex constraints}

We start from the easiest transport problem with convex constraints:

\begin{thm}
  Let $\mu,\,\nu$ be two probability measures on $\R^d$, with $\mu
  \ll\Leb^d$, $K$ a closed and convex subset of $\R^d$ and
  $h:\R^d\to\R$ a strictly convex function. Let $c(z)=h(z)+\chi_K(z)$:
  then the transport problem
$$\min\left\{\int_{\R^d\times \R^d} c(x-y)\,\gamma(dx,dy),\;\gamma\in\Pi(\mu,\nu)\right\}$$
admits a unique solution, which is induced by a transport map $T$,
provided that a transport plan with finite cost exists and the dual
problem admits a solution $(\phi,\psi)$ where $\phi$ is at least
approximately differentiable a.e.
\end{thm}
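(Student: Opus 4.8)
The plan is to run the classical argument that converts an optimal plan into a map --- complementary slackness, differentiate the Kantorovich potential, read off the displacement --- but adapted to the extended-real-valued cost $c=h+\chi_K$, so that the ``displacement direction'' step becomes a strictly convex minimization over $K$.

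First I would fix an optimal plan $\gamma\in\Pi(\mu,\nu)$ (it exists by the usual compactness/lower-semicontinuity argument, and it has finite cost since a finite-cost competitor is assumed to exist) together with the dual solution $(\phi,\psi)$ granted by hypothesis. Complementary slackness gives $\phi(x)+\psi(y)=c(x-y)$ on $\spt\gamma$ and $\phi(x)+\psi(y)\le c(x-y)$ everywhere; in particular $x-y\in K$ for every $(x,y)\in\spt\gamma$, and $\phi,\psi$ are finite on the relevant projections. Since $\mu\ll\Leb^d$ and $\phi$ is approximately differentiable a.e., the approximate gradient $\nabla\phi(x)$ exists for $\mu$-a.e.\ $x$.

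The heart of the argument is to show that, at such a point $x_0$ and for any $y_0$ with $(x_0,y_0)\in\spt\gamma$, one has
$$\nabla\phi(x_0)\in\partial\big[\,x\mapsto c(x-y_0)\,\big](x_0)=\partial h(x_0-y_0)+N_K(x_0-y_0),$$
the sum rule being legitimate because $\mathrm{dom}\,h=\R^d$ meets the relative interior of $K$, and $N_K$ denoting the normal cone. This should follow from the fact that $x\mapsto c(x-y_0)-\psi(y_0)$ is convex, lies everywhere above $\phi$, and touches it at $x_0$. I expect this to be the main obstacle: approximate differentiability controls $\phi$ only off a density-zero set and not along individual rays, so passing from ``$\phi$ below a convex function with contact and an approximate tangent plane at $x_0$'' to ``the approximate gradient is a subgradient of that convex function'' requires a careful density-and-continuity argument on directional derivatives, with extra care at points where $x_0-y_0\in\partial K$, where the convex majorant jumps to $+\infty$.

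Once this inclusion is in hand, set $z_0:=x_0-y_0\in K$: it says exactly that $z_0$ satisfies the first-order optimality condition for the strictly convex problem $\min_{z\in K}\{h(z)-\scal{\nabla\phi(x_0)}{z}\}$; for a convex problem this condition is also sufficient for optimality, and strict convexity of $h$ makes the minimizer unique, so $z_0$ is that unique minimizer and hence depends only on $x_0$ (through $\nabla\phi(x_0)$). Therefore $y_0=x_0-z(x_0)=:T(x_0)$ for $\gamma$-a.e.\ $(x_0,y_0)$, i.e.\ $\gamma=(\mathrm{id},T)_\#\mu$, so that $T_\#\mu=\nu$ and $T$ is an optimal transport map. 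For uniqueness I would note that $T$ is built from the fixed potential $\phi$ alone and that complementary slackness with this same $(\phi,\psi)$ holds on the support of every optimal plan, so every optimal plan equals $(\mathrm{id},T)_\#\mu$; equivalently, mixing two optimal plans would produce an optimal plan concentrated on a graph, forcing the two to coincide.
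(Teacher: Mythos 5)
Your proposal is correct and follows essentially the same route as the paper: complementary slackness, the subdifferential inclusion $\nabla\phi(x)\in\partial h(x-y)+N_K(x-y)$ (with the same sum rule justified by $h$ being finite everywhere), and the observation that this makes $x-y$ the unique minimizer of the strictly convex problem $\min_{z\in K}\{h(z)-\nabla\phi(x)\cdot z\}$, hence a function of $x$ alone. The only difference is that you flag the passage from approximate differentiability to the subgradient inclusion as a delicate point, whereas the paper asserts it without further comment.
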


\begin{proof}
  As usual, consider an optimal transport plan $\gamma$ and the
  potentials $\phi$ and $\psi$ optimality conditions on optimal
  transport plans and optimal potentials for this problem read as
$$\phi(x)+\psi(y)=h(x-y)\;\mbox{ on }\spt\gamma\subset\{x-y\in K\}$$
and
$$ \phi(x)+\psi(y)\leq h(x-y)\;\mbox{ for all }(x,y)\,:\,x-y\in K$$
and, if $\phi$ is differentiable at $x$, they lead to
\begin{equation}\label{sumsubd}
  \nabla \phi(x)\in \partial c(x-y)=\partial h(x-y)+\n _K(x-y),
\end{equation}
where $N_K(z)$ is the normal cone to $K$ at $z$. We used the fact that
the sub-differential of the sum $h$ and $\chi_K$ is the sum of their
sub-differentials since $h$ is real-valued and hence
continuous. Equation \eqref{sumsubd} is verified by the true gradient
of $\phi$ if it exists but it stays true for the approximate gradient
if $\phi$ is only approximately differentiable.

Yet, when a vector $l$ and a point $\bar{z}\in K$ satisfy
$$l\in \partial h(\bar{z})+N_K(\bar{z}),$$
thanks to the convexity of $h$ and $K$, this gives that $\bar{z}$
minimizes $K\ni z\mapsto h(z)-l\cdot z$. Since $h$ is strictly convex
this gives the uniqueness of $\bar{z}$, which will depend on $l$. We
will denote it by $\bar{z}(l)$.

In this case, we get $x-y=\bar{z}(\nabla\phi(x))$, which is enough to
identify $y=T(x):=x-\bar{z}(\nabla\phi(x))$ and proving existence of a
transport map which is necessarily unique.
\end{proof}

\begin{rmk}
  Notice that, in the case $h(z)=\frac 12 |z|^2$, the point
  $\bar{z}(l)$ will be the projection of $l$ on $K$, which gives the
  nice formula
$$T(x)=x-{\rm{proj}}_K(\nabla\phi(x)),$$
i.e. a generalization of the usual formula for the optimal transport
in the quadratic case.
\end{rmk}

\subsection{Strictly convex costs of one variable and convex constraints}
Let $\mu\ll\Leb^2$ and $\nu$ be two probability measures in $\R^2$,
let $K$ be a convex subset of $\R^2$ such that
$\stackrel{\circ}{K}\neq \emptyset$ and consider the cost
$$ c(x-y)=h(x_1-y_1)+\chi_K (x-y),$$  
for a function $h:\R \to \R$ increasing and strictly convex.
\begin{thm}\label{main} Assume that there exist $\gamma \in
  \Pi(\mu,\nu)$ and $\phi, \psi $ Lipschitz such that
  \begin{equation}\label{poten1}
    \phi(x)+\psi (y) \leq c(x-y) \ \ \ \forall x,y 
  \end{equation}
  \begin{equation}\label{poten2}
    \phi(x)+\psi (y) = c(x-y) \ \ \ \gamma-a.e. 
  \end{equation}
  Then there exists an optimal transport map for the cost $c$ between
  $\mu$ and $\nu$
\end{thm}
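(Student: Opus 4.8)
The plan is to use the duality relations to reduce to a one-parameter family of one-dimensional transport problems along the faces $\{z_1=\mathrm{const}\}\cap K$ of $c$, in the spirit of Section~\ref{decomp}, and then to glue. First, since $\phi$ is Lipschitz it is differentiable $\Leb^2$-a.e., hence $\mu$-a.e. because $\mu\ll\Leb^2$; and since $\gamma\in\Pi(\mu,\nu)$ together with \eqref{poten1}--\eqref{poten2} forces $(\phi,\psi)$ to be optimal for the dual and $\gamma$ optimal for the primal, the usual first-order condition (exactly as in \eqref{sumsubd}) gives, for $\mu$-a.e. $x$ and every $y$ with $(x,y)\in\spt\gamma$,
$$\nabla\phi(x)\in\partial c(x-y)=\bigl(\partial h\bigl((x-y)_1\bigr)\times\{0\}\bigr)+\n_K(x-y),$$
the sum rule being licit because $z\mapsto h(z_1)$ is finite and continuous.

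The first step is that the \emph{first} coordinate of the displacement is determined by $x$. Indeed, if $p\in\partial c(z)\cap\partial c(z')$ then by convexity $z$ and $z'$ both minimise $w\mapsto c(w)-p\cdot w$, so $c$ is affine on $[z,z']\subset K$; but $c(w)=h(w_1)$ on $K$, hence $h$ is affine between $z_1$ and $z'_1$, which by strict convexity of $h$ forces $z_1=z'_1$. So there is a measurable $T_1$ with $\gamma$ concentrated on $\{y_1=T_1(x)\}$; set $t(x):=x_1-T_1(x)$ for the (now well defined) first component of the displacement. Since $c(x-y)=h\bigl((x-y)_1\bigr)$ on $\{x-y\in K\}$, the cost of \emph{any} $\gamma'\in\Pi(\mu,\nu)$ concentrated on $\{y_1=T_1(x)\}$ equals $\int h(t(x))\,d\mu(x)$, so every such $\gamma'$ is optimal and it suffices to produce one that is induced by a map. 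Disintegrating along $s=y_1$, put $\lambda:=(T_1)_\#\mu=(\pi_1)_\#\nu$, $\mu=\int\mu^s\,d\lambda(s)$ with $\mu^s$ carried by the fibre $T_1^{-1}(s)$, and $\nu=\int\delta_s\otimes\nu^s_2\,d\lambda(s)$. For $\lambda$-a.e.\ $s$ one is then reduced to transporting $\mu^s$ onto $\nu^s_2$ ``in the second variable'', under the residual interval constraint $x_2-y_2\in K_{t(x)}$, where $K_{\tau}:=\{r:(\tau,r)\in K\}$; this is a one-dimensional transport problem with a convex constraint on the displacement, feasible since $\gamma$ itself respects the constraint, and the monotone rearrangement solves it and is induced by a map as soon as $\mu^s$ is non-atomic. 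Gluing these maps measurably in $s$ yields $T(x)=\bigl(T_1(x),S_{T_1(x)}(x)\bigr)$.

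The step I expect to be the main obstacle is exactly the non-atomicity of $\mu^s$ for $\lambda$-a.e.\ $s$: it is the only genuinely two-dimensional point and, as stressed in Section~\ref{decomp}, it is here the analogue of the regularity of transport rays in Sudakov's argument. It has to be extracted from $\mu\ll\Leb^2$ together with structural information on the fibres $T_1^{-1}(s)$ --- typically the monotonicity of $x_1\mapsto T_1(x_1,x_2)$ inherited from $c$-cyclical monotonicity, the convexity of $K$, and the sharper relation $\nabla\phi(x)=(h'(t(x)),0)$ (in particular $\partial_2\phi(x)=0$) which holds whenever the displacement lies in $\stackrel{\circ}{K}$ and confines the ``spreading'' of $\gamma$ to the vertical direction. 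An alternative would be to replace $c$ by the strictly convex cost $c_\e(z)=h(z_1)+\tfrac{\e}{2}|z|^2+\chi_K(z)$, apply the previous theorem to obtain an optimal map $T_\e$, and let $\e\to0$; but this only displaces the difficulty, since one must then both manufacture dual potentials for $c_\e$ out of those for $c$ and check that the weak limit of the plans $(\mathrm{id},T_\e)_\#\mu$ is still induced by a map --- which again reduces to the same fibrewise non-atomicity.
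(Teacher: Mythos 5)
Your overall architecture is the same as the paper's: show that the first coordinate of the displacement is determined $\mu$-a.e.\ by $\nabla\phi(x)$ via the strict convexity of $h$, then solve a one-parameter family of one-dimensional problems in the second variable by monotone rearrangement under the residual interval constraint. But the step you flag as ``the main obstacle'' --- the non-atomicity of the conditional measures --- is not a technical footnote to be deferred: it is the entire two-dimensional content of the theorem, and your proposal leaves it unproved. Moreover, disintegrating with respect to $s=y_1=T_1(x)$, as you do, makes it essentially inaccessible: $T_1$ is only a measurable function of $x$ (through $\nabla\phi$), so its level sets $T_1^{-1}(s)$ are arbitrary measurable planar sets, and $\mu\ll\Leb^2$ gives no control on the projections of the conditional measures $\mu^s$ onto the $x_2$-axis --- these may perfectly well have atoms. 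In addition, the residual constraint $x_2-y_2\in K_{x_1-s}$ still depends on $x_1$, which varies along such a fibre, so the ``one-dimensional problem in the second variable'' is not even well posed on a general fibre.

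The paper's resolution is the structural fact you half-anticipate when you note $\partial_2\phi=0$: whenever the argmin set is not a singleton, the existence of two admissible targets $y,\tilde y$ with the same first coordinate and $y_2\neq\tilde y_2$, combined with the convexity of $K$ and \eqref{poten1}--\eqref{poten2}, forces $x$ to be a \emph{local maximum of $\phi$ along the vertical line} $x+te_2$. Introducing the closed sets $M_n=\{x:\phi(x+te_2)\le\phi(x)\ \forall t\in[-1/n,1/n]\}$ and cutting them into horizontal strips $M_n^i$ of height $1/n$, one finds that $\phi$ is vertically constant on each $M_n^i$, hence $\nabla\phi$, hence $T_1$, depends only on $x_1$ there. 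This converts your disintegration with respect to the map $T_1$ into a countable union of disintegrations with respect to the \emph{coordinate} $x_1$; Fubini applied to $\mu_{n,i}\ll\Leb^2$ then yields that the conditional measures on vertical lines are absolutely continuous (in particular non-atomic), and at the same time the fibres become genuinely vertical, so the residual constraint is an honest interval constraint on $x_2-y_2$ preserved by the monotone map. Without this step (or an equivalent one) your argument does not close, and, as you yourself observe, the regularisation $c_\e$ only relocates the same difficulty.
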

\begin{rmk} The assumption above implies that $\gamma$ is an optimal
  plan and the couple $(\phi,\psi)$ is optimal for the dual
  problem. In order to have the existence of an optimal $\gamma$ it is
  enough to assume that there exists at least one $\sigma \in
  \Pi(\mu,\nu)$ such that
$$ \int_{\R^2\times \R^2}  c(x-y) \sigma(dx,dy) <\infty.$$
Again we recall that this assumption is not enough to have the
existence of an optimal couple for the dual problem.
\end{rmk}
\begin{proof} As $\Leb^2(\R^2 \setminus \Diff(\phi))=0$ also
  $\mu(\R^2 \setminus \Diff(\phi))=0 $ then we can restrict our
  attention to the points of differentiability of $\phi$. For each
  $(x,y)\in spt(\gamma)$ such that $x\in \Diff(\phi)$ by
  (\ref{poten1}) and (\ref{poten2}) we obtain
  \begin{equation}\label{subdiff}
    \nabla\phi(x)\in h'(x_1-y_1)e_1+ \n_K(x-y).
  \end{equation}
  By the convexity of the functions involved (\ref{subdiff}) is
  equivalent to
$$x-y \in arg\min_z  \{ h(z_1)-\nabla\phi(x)\cdot z +\chi_K (z)\}.$$
Let $S$ be the set of those $x$ such that the set on the right-hand
side of (\ref{subdiff}) is a singleton
$$arg\min_z  \{ h(z_1)-\nabla\phi(x)\cdot z +\chi_K (z)\}=\{p\},$$
then $y$ is uniquely determined by
$$y=x-p. $$
Let us consider a decomposition of $\gamma$ in two parts: $\gamma=
\gamma_{|S\times \R^2}+ \gamma_{|S^c \times \R^2}$ The first part of
the decomposition is already supported on the graph of a Borel map.
As far as the second part is concerned, we will prove the existence of
a transport map which gives the same cost and the same marginals. This
will be done by a sort of one-dimensional decomposition according to
the following observations.

Whenever the set on the right-hand side of (\ref{subdiff}) is not a
singleton then by the convexity of the function
$h(z_1)-\nabla\phi(x)\cdot z$ it is a convex subset of $K$.  Even
more, by the strict convexity of $h$ there exist a number $m(\nabla
\phi(x))$ such that
$$arg\min_z  \{ h(z)-\nabla\phi(x)\cdot z +\chi_K (z)\}\subset \{z\cdot e_1=m(\nabla \phi(x))\}\bigcap K .  $$
We claim that if $arg\min_z \{ h(z)-\nabla\phi(x)\cdot z +\chi_K (z)\}
$ has more than one element then $x$ is a local maximum of $\phi$ on
the line $x+te_2$.  In fact assume that there exist two points
$y=(m(\nabla \phi(x)),y_2)$ and $\tilde{y}=(m(\nabla \phi(x)),
\tilde{y}_2)$ such that $x-y \in arg\min_z \{ h(z)-\nabla\phi(x)\cdot
z +\chi_K (z)\} $ and $x-\tilde{y} \in arg\min_z \{
h(z)-\nabla\phi(x)\cdot z +\chi_K (z)\}$ and assume without loss of
generality that $\tilde{y}_2 < y_2$.  As $x-y \in K$ also $x+te_2 -y
\in K$ for small and positive $t$ (because it belongs to the segment
$[x-y,x-\tilde{y}]$), then
$$\phi (x+te_2)+\psi(y) \leq h(x_1-m(\nabla \phi(x))) $$
$$\phi(x)+\psi(y) =h(x_1-m(\nabla \phi(x))) $$
and subtracting the second equation from the first
$$ \phi(x+te_2)-\phi(x) \leq 0.$$
The same inequality is obtained for small, negative $t$ using
$\tilde{y}$.  Introduce now the set $M_n=\{x \ | \ \phi(x+te_2) \leq
\phi(x)\ \forall t \in [-\frac{1}{n},\frac{1}{n}]\}$.  The set $M_n$
is closed. Let $M_n^i:=M_n\cap \{x \ : \ x\cdot e_2 \in [\frac{i}{n},
\frac{i+1}{n}]\} $, as a consequence of local maximality, $\phi$ is
vertically constant on $M_n^i$.  There exists a function
$\tilde{\phi}$ depending only on the variable $x_1$ such that
$\tilde{\phi}$ coincide with $\phi$ on $M_n^i$. Then on the set $\Leb
(M_n^i)\cap \Diff(\phi)$, $\nabla \phi=app\nabla \phi=app\nabla
\tilde{\phi}$ and the latter approximate gradient depends only on
$x_2$ which implies that $\nabla \phi$ is vertically constant on a
subset of full measure of $M_n^i$.  The disjoint union $S^c
=\cup_{n,i} [S^c \cap (M_n^i\setminus M_{n-1})] $ induces the
decomposition
$$\gamma_{|S^c \times \R^2}= \sum_{n,i}\gamma_{n,i},$$
where $\gamma_{n,i}:=\gamma_{|[S^c \cap (M_n^i\setminus
  M_{n-1})]\times \R^2}$.

Denote by $\mu_{n,i}$ and $\nu_{n,i}$ the marginals of $\gamma_{n,i}$.
Clearly $\mu_{n,i}$ is absolutely continuous with respect to
$\Leb^2$. Consider the disintegration according to $x_1$
$$\mu_{n,i}= a_{n,i}(x_1)\cdot\Leb^1 \otimes \mu_{n,i}^{x_1} $$
with $\mu_{n,i}^{x_1}\ll\Leb^1$ for almost every $x_1$, where
$a_{n,i}(x_1)\cdot\Leb^1$ is the projection of $\mu_{n,i}$ on the
first variable (which is absolutely continuous as well). Analogously,
we define $\nu_{n,i}^{y_1}$ as the disintegration of $\nu_{n,i}$ with
respect to $y_1$. From what we said, it follows that there exist a
map $T_{n,i}:\R \to \R$ such that $spt \gamma_{n,i} \cap \{x \ : \ x
\cdot e_1 =x_1\} \subset \{(x,y)\ : \ y\cdot e_1=T_{n,i}(x_1)
\}$. Notice that $T_{n,i}(x_1)$ coincides with $m(\nabla\phi(x))$ for
every $x\in M_n^i$ whose first component is $x_1$.

Also disintegrate
$$\gamma_{n,i}=a_{n,i}(x_1)\cdot\Leb^1 \otimes \gamma_{n,i}^{x_1} $$
with $ \gamma_{n,i}^{x_1}\in \Pi( \mu_{n,i}^{x_1}, \delta
_{T_{n,i}(x_1)} \otimes \nu_{n,i}^{T_{n,i} (x_1)} ). $ It is then
enough to replace $ \gamma_{n,i}$ with a new transport plan of the
form $({\rm{id}}\times S_{n,i})_\#\mu_{n,i}$ with the following
properties
\begin{gather*}
  S_{n,i}(x_1,x_2)=(T_{n,i}(x_1),S_{n,i}^{x_1}(x_2)),\\
  (S_{n,i}^{x_1})_\#\mu_{n,i}^{x_1}=\nu_{n,i}^{T_{n,i}(x_1)},\quad
  x_2-S_{n,i}^{x_1}(x_2)\in K_{x_1-T_{n,i}(x_1)},
\end{gather*}
where we denote, for every $t$, the section of $K$ at level $t$ by
$K_t:=\{s\;:\;(t,s)\in K\}$.
 
Since the $h$-part of the cost only depends on $x_1-y_1$ and
$\gamma_{n,i}$ and the transport plan issued by the map $S_{n,i}$
realize the same displacements as far as the first coordinates are
concerned, this part of the cost does increase. Moreover, the
constraint $(x_1-y_1,x_2-y_2) \in K$, which amounts to the requirement
that $x_2-y_2$ belongs to a segment $K_{x_1-y_1}$, is preserved,
thanks to the last condition above. Hence, the two transport plans
have exactly the same cost.

We are only left to find, for every $x_1$, the map
$S_{n,i}^{x_1}$. For this, it is enough to choose the monotone
transport map from $\mu_{n,i}^{x_1}$ to $\nu_{n,i}^{x_1}$. This map is
well defined because the first measure is absolutely continuous (in
particular there are no atoms). The constraint is preserved because
$\gamma_{n,i}^{x_1}$ satisfied it, and the monotone transport map is
optimal for any convex transport cost of $x_2-y_2$ .

Notice that the whole map $S_{n,i}$ we are using is measurable since
in every ambiguous case we chose the monotone transport map.
\end{proof}

\section{Application: $c(x-y)=h(\|x-y\|)$ in $\R^2$}\label{cas2d}
\begin{thm}
  Let $\mu,\,\nu$ be probability measures compactly supported in
  $\R^2$, with $\mu\ll\Leb^2$, let $||\cdot||$ be an arbitrary norm of
  $\R^2$ and $h:\R^+\to\R^+$ a strictly convex and increasing
  function. Denote by $c$ the function $c(z)=h(||z||)$, which is still
  a convex function. Then the transport problem
$$\min\left\{\int_{\R^d\times \R^d} c(x-y)\; \gamma(dx,dy),\;\gamma\in\Pi(\mu,\nu)\right\}$$
admits at least a solution which is induced by a transport map $T$.
\end{thm}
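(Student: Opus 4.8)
The plan is to apply the decomposition strategy of Section \ref{decomp}, reducing the problem to the face-restricted problems, which here will be of the type treated in Theorem \ref{main}. First I would compute $\mathcal{F}_c$ for $c(z)=h(\|z\|)$. Since $h$ is strictly convex and increasing, $c$ is strictly convex in the radial direction, so the only way $c$ can be affine on a nontrivial convex set is along directions where the norm $\|\cdot\|$ itself is affine, i.e. on faces of the unit ball $B=\{\|z\|\le 1\}$ (suitably dilated). More precisely, for $p\neq 0$ the face $\partial c^*(p)$ is a segment (or a point) lying on the boundary $\{\|z\|=r\}$ of some dilate of $B$, parallel to a face of $\partial B$; the singleton case $\partial c^*(0)=\{0\}$ collects the points where $\phi$ has zero (approximate) gradient, where a transport map trivially exists. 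In $\R^2$ the unit ball of a norm is a convex body whose boundary has at most countably many flat pieces (line segments), since distinct maximal segments on the boundary of a planar convex set have disjoint relative interiors and each contains a rational point. Hence $\mathcal{F}_c^{multi}$ is at most countable: it is parametrized by (a countable set of segment-directions on $\partial B$) $\times$ (the radius $r>0$), and one bundles together, for each fixed boundary-direction, all radii into a single countable family indexed by that direction together with a countable choice of radii — or, more cleanly, one notes that for fixed direction $e$ the faces at different radii are disjoint and the corresponding sets $X_K$ partition a Borel set, so the union over all radii can be handled as one "direction class".

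Next, for each such direction class $e$ (one of the countably many directions in which $\partial B$ is flat), I would collect all the corresponding $X_K$ into one set $X_e$ and set $\gamma_e:=\gamma_{|X_e\times\R^2}$, with marginals $\mu_e\ll\Leb^2$ (a submeasure of $\mu$, hence absolutely continuous) and $\nu_e$. On $X_e$ the displacement $x-y$ is constrained to lie in $K_x:=\partial c^*(\nabla\phi(x))$, a segment parallel to $e$ at some radius depending on $x$; after a linear change of coordinates sending $e$ to the second coordinate axis $e_2$, the constraint reads $x_1-y_1=$ (a quantity determined by $\nabla\phi(x)$) and $x_2-y_2$ free within an interval. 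Along this class the cost $c(x-y)=h(\|x-y\|)$ becomes, on each such segment, an affine — in fact, since $h$ is strictly convex, strictly convex in $x_1-y_1$ but independent of $x_2-y_2$ — function; this is exactly the structure $c(x-y)=\tilde h(x_1-y_1)+\chi_{\tilde K}(x-y)$ of the cost treated in Theorem \ref{main}, where $\tilde K$ encodes the convex region $\{z:\|z\|\le\text{const}\}$ restricted to the relevant strip and $\tilde h$ is a strictly convex increasing function of one variable obtained from $h$ and the norm. The hypotheses of Theorem \ref{main} that need checking are the existence of a transport plan with finite cost (inherited from $\gamma_e$, which has finite cost because $\gamma$ does) and, crucially, the existence of Lipschitz Kantorovich potentials $(\phi_e,\psi_e)$ for the face-restricted problem; here I would take $\phi_e:=\phi$ and $\psi_e:=\psi$ restricted appropriately — since $(\phi,\psi)$ are already Lipschitz Kantorovich potentials for the global cost $c$ (the global $c$ is a finite continuous cost, so potentials exist and can be taken Lipschitz by $c$-concavity and compact support), and their optimality relation \eqref{cstar} forces $x-y$ into the face, the pair $(\phi,\psi)$ automatically satisfies \eqref{poten1}--\eqref{poten2} for the constrained cost on $\mu_e,\nu_e$.

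Then Theorem \ref{main} yields, for each of the countably many classes $e$, an optimal transport map $T_e:\operatorname{spt}\mu_e\to\R^2$ with $(\mathrm{id},T_e)_\#\mu_e$ having the same marginals and the same cost as $\gamma_e$. On the singleton part $S$ (where $\partial c^*(\nabla\phi(x))$ is a point, including the $\nabla\phi(x)=0$ case) the plan $\gamma_{|S\times\R^2}$ is already carried by a graph. Gluing: since the sets $X_e$ and $S$ are pairwise disjoint Borel sets covering $\mu$-almost all of $\R^2$, the map $T$ defined to equal $T_e$ on $X_e$ and the existing selection on $S$ is a well-defined Borel map with $T_\#\mu=\nu$ (the $\nu_e$'s and the singleton part sum to $\nu$), and $\int c(x-T(x))\,d\mu = \sum_e \int c\,d\gamma_e + \int_S c\,d\gamma = \int c\,d\gamma$, so $T$ is optimal. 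I expect the main obstacle to be the bookkeeping of the decomposition — verifying that $\mathcal{F}_c^{multi}$ is genuinely countable (the planar-convex-body flat-pieces argument) and that the bundling into direction classes produces Borel sets $X_e$ with the claimed absolute continuity and constraint structure — together with the verification that the restricted potentials are admissible and Lipschitz for the face-restricted problems; once those are in place, Theorem \ref{main} does the real work and the gluing is routine because the decomposition is countable (so no delicate disintegration in the $x$-variable is needed, unlike in Sudakov's approach).
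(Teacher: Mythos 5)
Your proposal follows essentially the same route as the paper: decompose by the countably many flat pieces of $\partial B$, bundle all radii for a fixed flat piece into one class (the paper uses the convex cone $K_i=\{tz:t\ge 0,\,z\in F_i\}$ generated by the face), rotate coordinates so that $\|z\|=z_1$ on that cone and the cost becomes $h(x_1-y_1)+\chi_{K_i}(x-y)$, then apply Theorem \ref{main} to each $\gamma_i$ with the original global Lipschitz potentials $(\phi,\psi)$, and glue the countably many maps. The only slip is your description of the constraint set as a ball intersected with a strip (on which $\|z\|\neq z_1$ in general); the correct convex set is the cone itself, which is what makes the cost genuinely one-variable there.
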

\begin{proof}
  The strategy to prove this theorem is almost the one described in
  Section 2, with the additional trick of bundling together some faces
  of the convex function $h(||x-y||)$. The boundary of the unit ball
  of the norm $||\cdot||$ has a countable number of flat parts (they
  are countable since each one has a positive length, otherwise it
  would not be a flat part, and we cannot have more than a countable
  quantity of disjoint positive length segments in the boundary). We
  call $F_i$, with $i=1,2,\dots$ the closure of these flat parts and
  we then associate to each face $F_i$ the cone $K_i=\{tz,\;t\geq
  0,\;z\in F_i\}$.

  Consider an optimal transport plan $\gamma$ and a pair of
  Kantorovich potentials $(\phi,\psi)$ (these objects exist since the
  cost is continuous). From the general theory of optimal transport
  and what we underlined in Section 2, it follows that if
  $(x,y)\in\spt(\gamma)$ and $x$ is a differentiability point of
  $\phi$, then $\nabla\phi(x)\in\partial c(x-y)$. This may be
  re-written as
$$x-y\in\partial c^*(\nabla\phi(x)).$$
We classify the points $x$ according to $\partial
c^*(\nabla\phi(x))$. This subdifferential may be either a singleton or
a segment (it cannot be a set of dimension two because otherwise $c$
would be affine on a set with non-empty interior, which is in
contradiction with the strict convexity of $h$). We call $X_0$ the set
of those $x$ such that $\partial c^*(\nabla\phi(x))$ is a singleton.
If instead it is a segment, it is a segment of points which share a
single vector in the subdifferential of $c$. This means, thanks to the
shape of $c$, that it is necessarily an homothety of one face $F_i$:
$$x-y\in\partial c^*(\nabla\phi(x))=t(x)F_{i(x)}\subset K_{i(x)}.$$
We denote by $X_i$ the set of points $x$ where $c^*(\nabla\phi(x))$ is
an homothety of $F_{i}$.

The disjoint decomposition given by the sets $X_i$ induces a
corresponding decomposition $\gamma=\sum_{i\geq 0} \gamma_i$ where
$\gamma_i=\gamma_{|X_i\times\R^2}$.  Each sub-transport plan
$\gamma_i$ is an optimal transport plan between its marginals $\mu_i$
and $\nu_i$, which are submeasures of $\mu$ and $\nu$,
respectively. In particular, each measure $\mu_i$ is absolutely
continuous.

We now build a solution to the optimal transport problem for the cost
$c$ that is induced by a transport map by modifying $\gamma$ in the
following way: $\gamma_0$ is already induced by a transport map since
for every $x$ we only have one $y$; every $\gamma_i$ for $i\geq 1$
will be replaced by a new transport plan with the same marginals which
does not increase the cost, thanks to Theorem \ref{main}. Since
$\gamma_i$ sends $\mu_i$ to $\nu_i$, is optimal for the cost $c$ and
satisfies the additional condition $\spt(\gamma_i)\subset
\{(x,y)\,:\,x-y\in K_i\}$, it is also optimal for the cost
$c(x-y)+\chi_{K_i}(x-y)$. The advantage of this new cost is that $c$
depends on one variable only, when restricted to $K_i$ since there
exists a new basis $(e_1,e_2)$ such that $||z||=z_1$ for every $z\in
K_i$ which is written as $(z_1,z_2)$ in this new basis (the direction
of the vector $e_1$ being the normal direction to $F_i$). Theorem
\ref{main} precisely states that the optimal transport problem for a
cost which is given by a strictly convex function of one variable plus
a constraint imposing that $x-y$ belongs to a given convex set admits
a solution induced by a transport map, provided the first measure is
absolutely continuous and some $\gamma$, $\phi$ and $\psi$ satisfying
equations \ref{poten1} and \ref{poten2} exist. This last assumption is
exactly satisfied by taking $\gamma_i$ and the original potentials
$\phi$ and $\psi$ for the problem with cost $c$ from $\mu$ to $\nu$,
since they satisfied
\begin{gather*}
  \phi(x)+\psi(y)\leq h(||x-y||)\leq h(||x-y||)+\chi_{K_i}(x-y),\\
  \phi(x)+\psi(y)= h(||x-y||)= h(||x-y||)+\chi_{K_i}(x-y)\mbox{ on
  }\spt(\gamma_{K_i}).\qedhere
\end{gather*}
\end{proof}

\section*{Acknowledgments}
The work of the authors has been partially supported by the University Paris Dauphine (CEREMADE) and the University of Pisa through the project 
"Optimal Transportation and related topics". G.C. and F.S. gratefully acknowledge the support of the Agence Nationale de la Recherche through the project  "ANR-07-BLAN-0235 OTARIE". 

The authors wish to thank  the Centro de Ciencias de Benasque Pedro Pascual (CCBPP)
where this work was completed during the program ``Partial differential equations, optimal design and numerics'', August 23 - September 4 2009.



\end{document}